\newtheorem{theorem}{Theorem}
\newtheorem{proposition}{Proposition}
\newtheorem{remark}{Remark}
\newtheorem{example}{Example}
\newenvironment{definition}
{\smallskip\noindent{\bf Definition\/}:}{\smallskip\par}
\newenvironment{proof}{\begin{ProofwCaption}{Proof}}{\end{ProofwCaption}}
\newenvironment{proof*}[1]{\begin{ProofwCaption}{{#1}}}{\end{ProofwCaption}}
\newenvironment{ProofwCaption}[1]%
 {\addvspace\theorempreskipamount \noindent{\it #1.}\rm}%
 {\qed \par \addvspace\theorempostskipamount}
\newcommand{\qedsymbol}{\mbox{$\Box$}}
\newcommand{\qed}{\hfill\qedsymbol}
\newcommand{\CC}{{\mathbb C}}
\newcommand{\RR}{{\mathbb R}}
\newcommand{\ZZ}{{\mathbb Z}}
\newcommand{\eps}{\varepsilon}
\newcommand{\ind}{{\rm ind}\,}
\newcommand{\indGSV}{{\rm ind}_{\rm GSV}}
\newcommand{\indrad}{{\rm ind}_{\rm rad}}
\newcommand{\Conjsub}{{\rm Conjsub}\,}
\newcommand{\Sub}{{\rm Sub}\,}
\newcommand{\Sing}{{\rm Sing}\,}
\title{Equivariant indices of vector fields and 1-forms}
\author{W.~Ebeling and S.~M.~Gusein-Zade
\thanks{Partially supported by the DFG (Mercator fellowship, Eb 102/8--1),
the Russian government grant 11.G34.31.0005, RFBR--13-01-00755,
and NSh--4850.2012.1. 
Keywords: singularities, vector fields, 1-forms, finite group action, index, Burnside ring.
AMS 2010 Math. Subject Classification: 14B05, 58K45, 58A10, 58E40, 19A22.
}
}
\date{}
\begin{document}
\selectlanguage{english}

\maketitle

\begin{abstract}
Equivariant versions of the radial index
and of the GSV-index of a vector field or a 1-form on a singular variety with an action of a finite group are defined. They have values in the Burnside ring of the group. Poincar\'e--Hopf type theorems for them are proven and some of their properties are described.
\end{abstract}

\section*{Introduction}
The classical notion of the index of an isolated singular point (zero) of a vector field
has several generalizations to vector fields and 1-forms on singular varieties.
In particular, there were defined the GSV-index of a vector field on an isolated hypersurface
or complete intersection singularity \cite{GSV, SS}, the Euler obstruction of a 1-form \cite{MacPh},
the homological index of a vector field \cite{G-MM} or of a 1-form \cite{EGS}, the radial index of a vector
field or of a 1-form, \dots

The radial index of an isolated singular point of a (stratified) vector field or of a 1-form on a singular variety
was defined in \cite{GD, Sur}, following earlier ideas of \cite{MHSch} and \cite{MMJ}.
Initially it was defined for  vector fields and 1-forms on analytic varieties. However without any difference
one can consider the radial index on closed (real) subanalytic varieties (as noticed in \cite{D}). The sum of the
radial indices of a vector field or of  a 1-form with isolated singular points on a compact variety is equal
to the Euler characteristic of the variety. Therefore the (radial) index can be considered as a localization
of the Euler characteristic (on a singular variety).

There are several equivariant versions of the Euler characteristic for spaces with actions of
a finite group $G$. First, one can consider the Euler characteristic of the quotient space.
One can define an equivariant Euler characteristic as an element of the representation ring
of the group $G$: see, e.g., \cite{TtD, Wall1}. There exists the so-called orbifold Euler
characteristic (with values in the ring $\ZZ$ of integers) offered by physicists \cite{Vafa, HH}
and its higher order generalizations \cite{AS, BF}. However a more general concept is the
equivariant Euler characteristic defined as an element of the Burnside ring $B(G)$ of the group $G$
\cite{Verdier, TtD}. The other versions of the Euler characteristic mentioned above are
specializations of this one.

This makes reasonable to try to define indices of singular points of vector fields and of 1-forms
on $G$-varieties as elements of the Burnside ring $B(G)$
so that they will be localizations of the equivariant Euler characteristic.
Their specializations will give other
equivariant versions of the indices, say, as an element of the ring of representations or an
orbifold version (with values in $\ZZ$). For vector fields on (smooth) $G$-manifolds this was
done in \cite{Luck}.

In this paper we define equivariant versions with values in the Burnside ring of the radial index
and of the GSV-index, prove Poincar\'e--Hopf type theorems for them and describe some of their
properties.

In Section~\ref{sect1} we recall the concepts of the radial index and of the GSV-index in the usual
(non-equivariant) setting. In Section~\ref{sect2} we give definitions of the equivariant Euler
characteristic, of the orbifold Euler characteristic and of its higher order generalizations.
In Section~\ref{sect3} we discuss an equivariant version of the radial index. Section~\ref{sect4}
is devoted to an equivariant version of the GSV-index.

\section{Indices of vector fields and 1 forms}\label{sect1}
Let $(V,0)$ be a germ of a closed (real) subanalytic variety. We assume $(V,0)$ to be embedded into $(\RR^N, 0)$.
Let $V=\bigcup_{i=1}^q V_i$ be a subanalytic Whitney stratification of $V$.
A (continuous) {\em stratified vector field} on $V=\bigcup_{i=1}^q V_i$
is a vector field such that, at each point $p$ of $V$, it is tangent
to the stratum containing $p$ \cite{BSS}.

Let $B_\eps$ be a closed ball (in the ambient space $\RR^N$) around the origin
(small enough so that the sphere $S_\eps=\partial B_\eps$ intersects all
the strata $V_i$ transversally). Let $L=V\cap S_\eps$ be the link of $V$.
There exists a diffeomorphism $h$ of $V\cap B_\eps$ with the cone
$CL=L\times[0,1]/L\times\{0\}$ over $L$ being the identity on $L=L\times\{1\}$.
(A diffeomorphism of stratified varieties is a homeomorphism which respects
the stratification and is a diffeomorphism on each stratum.) A vector field $Y$
on $(V,0)$ is called {\em radial} if, for a certain diffeomorphism $h$,
it is identified with the vector field $\frac{\partial{\ }}{\partial t}$ on $CL$.

Let $p\in V$, let $V_{(p)}=V_i$ be the stratum containing $p$, $\dim V_i=k$,
and let $Y$ be a stratified vector field on $V$ in a neighbourhood
of the point $p$. Let $N_i$ be a normal slice of $V$ to the
stratum $V_i$ at the point $p$. There exists a diffeomorphism $h$
from a neighbourhood of $p$ in $V$ to the product $U_i(p)\times N_i$, where
$U_i(p)$ is a neighbourhood of $p$ in $V_i$, which is the identity on $U_i(p)$.
The vector field $Y$ is called a {\em radial extension} of the vector field
$Y_{\vert V_i}$ if there exists a diffeomorphism $h$ which identifies $Y$
with the vector field on $U_i(p)\times N_i$ of the form
$(Y_{\vert V_i},0)+(0, Y^{\rm rad}_{N_i})$, where $Y^{\rm rad}_{N_i}$
is a radial vector field on $N_i$ \cite{BSS}.

Let $X$ be a stratified vector field on $(V,0)$ with an isolated singular point
(zero) at the origin. One can show that there exists a (continuous) $G$-invariant
stratified vector field $\widetilde{X}$ on $V$ such that:
\begin{itemize}
\item[(1)] the vector field $\widetilde{X}$ coincides with $X$ on a neighbourhood
of the intersection of $V$ with the sphere $S_\eps=\partial B_\eps$
of a small radius $\eps$ around the origin;
\item[(2)] the vector field has a finite number of singular points (zeros);
\item[(3)] in a neighbourhood of each singular point $p\in V\cap B_\eps$, $p\in V_i$,
the vector field $\widetilde{X}$ is a radial extension of
its restriction to the stratum $V_i$.
\end{itemize}

The {\em radial index} $\indrad(X;V,0)$ of the vector field $X$
on $V$ at the origin is 
$$
\sum_{{p}\in {\rm Sing}\widetilde{X}} 
{\rm ind\,}(\widetilde{X}_{\vert V_{(p)}};V_{(p)},p)\,,
$$
where ${\rm ind\,}(\widetilde{X}_{\vert V_{(p)}},V_{(p)},p)$ is the usual index of the 
restriction of the vector field $\widetilde{X}$ to the smooth manifold $V_{(p)}$.

Now let $\omega$ be a (continuous) 1-form on $(V,0)$ (i.e.\ the restriction to $V$ of a 1-form
defined in a neighbourhood of $0$ in $\RR^N$). Let $V=\bigcup_{i=1}^q V_i$ be a subanalytic Whitney stratification of $V$. A point $p\in V$ is a {\em singular point} of $\omega$
if the restriction of $\omega$ to the stratum $V_{(p)}$ containing $p$ vanishes at the point $p$.
A 1-form $\eta$ on $(V,0)$ is called {\em radial} if its restriction to any (parametrized) analytic
curve $\varphi:(\RR_{\ge0},0)\to (V,0)$ is positive on the vector $\varphi_*\frac{\partial\ }{\partial t}$ 
in a punctured neighbourhood of zero. As an example, one can take $d \Vert \cdot \Vert^2$, where 
$\Vert \cdot \Vert$ is the Euclidean norm in $\RR^N$.

For a point $p$, $p\in V_i=V_{(p)}$, $\dim V_{(p)}=k$, there exists a (local) analytic
diffeomorphism $h:(\RR^N,\RR^k,0)\to (\RR^N, V_{(p)},p)$. A 1-form $\eta$ is called a
{\em radial extension} of the 1-form $\eta_{\vert V_{(p)}}$ if there exists a diffeomorphism
$h$ which identifies $\eta$ with the restriction to $V$ of the 1-form 
$\pi_1^* \eta_{\vert V_{(p)}}+\pi_2^* \eta^{rad}_{\RR^{N-k},0}$, where $\pi_1$ and $\pi_2$ 
are the projections of $\RR^N$ to $\RR^k$ and $\RR^{N-k}$ respectively,
$\eta^{rad}_{\RR^{N-k},0}$ is a radial 1-form on $(\RR^{N-k},0)$.


For a 1-form $\omega$ on $(V,0)$ with an isolated singular point at the origin there exists
a 1-form $\widetilde{\omega}$ on $V$ which possesses the obvious analogues of the properties (1)--(3)
of the vector field $\widetilde{X}$ above. The {\em radial index} of $\omega$ is 
$$
\indrad(\omega;V,0)=
\sum_{{p}\in {\Sing}\widetilde{\omega}} 
\ind(\widetilde{\omega}_{\vert V_{(p)}};V_{(p)},p)\,.
$$

There is a one-to-one correspondence between complex valued and real valued 1-forms on a complex analytic variety.
The real 1-form $\alpha$ corresponding to a complex 1-form $\omega$ is just its real part: $\alpha={\rm Re}\omega$.
The 1-form $\omega$ can be restored from $\alpha$ by the equation $\omega(u)=\alpha(u)-\alpha(iu)i$
($u$ is a tangent vector to the ambient space). Therefore, 
for a complex 1-form on a complex analytic variety, its radial index is defined as the radial index of its real part. For a complex 1-form on a $d$-dimensional complex analytic manifold, this index differs from the usual one by the sign $(-1)^d$.

The radial index of a vector field or of a 1-form is defined as the sum of the indices
of an appropriate deformation of it on the strata (smooth manifolds). The sum is the same for
a stratification and for a partition of it. This implies that the radial index (and also
its equivariant version below) does not depend on the stratification and thus is well-defined.
(For a vector field one can consider the intersection of two stratifications.
For a 1-form one can consider the minimal Whitney stratification of the variety.)

Let $(V,0)$ be a germ of a subanalytic variety, let $V=\bigcup_{i=1}^q V_i$ be a (subanalytic) Whitney stratification
of $V$, and let $X$ or $\omega$ be a stratified vector field or a 1-form on $(V,0)$ respectively with an isolated
singular point at the origin. One can show that the radial index $\indrad(X; V,0)$ or $\indrad(\omega,V,0)$ can be distributed between the strata $V_i$ in the following sense. The index is the number of singular points of an
appropriate deformation $\widetilde{X}$ or $\widetilde{\omega}$ of $X$ or $\omega$ respectively counted with multiplicities. 

\begin{proposition}\label{prop-distr}
The number of singular points of the vector field $\widetilde{X}$ or of a 1-form $\widetilde{\omega}$
on a fixed stratum $V_i$ does not depend on the choice of the vector field $\widetilde{X}$ or of the 1-form $\widetilde{\omega}$ respectively (and therefore only depends on $X$ or $\omega$ respectively).
\end{proposition}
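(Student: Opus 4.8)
The plan is to treat each stratum separately and to reduce the statement to the Poincar\'e--Hopf theorem for a vector field (resp.\ a $1$-form) on a compact manifold with boundary, whose total index depends only on the boundary data up to homotopy. Fix a stratum $V_i$ of dimension $k$ and a deformation $\widetilde{X}$ as above. Its singular points lying on $V_i$ are exactly the zeros of the restriction $\widetilde{X}_{\vert V_i}$, a smooth vector field on the $k$-manifold $V_i$, and the quantity in question --- the number of singular points on $V_i$ counted with the multiplicities, i.e.\ with the local indices of the restriction --- is
\[
n_i(\widetilde{X})=\sum_{p\in\Sing\widetilde{X}\cap V_i}\ind(\widetilde{X}_{\vert V_i};V_i,p).
\]
First I would localise the zeros. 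By property (1) the field $\widetilde{X}_{\vert V_i}$ coincides with $X_{\vert V_i}$, hence is nowhere zero, in a neighbourhood of $V_i\cap S_\eps$. Near a point $p$ of a lower stratum $V_j\subset\overline{V_i}$ one uses property (3): if $p\in\Sing\widetilde{X}$, the radial extension $(\widetilde{X}_{\vert V_j},0)+(0,Y^{\rm rad}_{N_j})$ has nonzero normal (radial) component off $V_j$; if $p\notin\Sing\widetilde{X}$, then $\widetilde{X}(p)\neq0$ in $\RR^N$. In either case $\widetilde{X}_{\vert V_i}$ is nowhere zero on $V_i$ near the frontier, so all zeros of $\widetilde{X}_{\vert V_i}$ lie in a compact subset of $V_i$ bounded away both from $S_\eps$ and from $\overline{V_i}\setminus V_i$.

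Next I would fix, once and for all, a compact $k$-dimensional manifold with boundary $W\subset V_i$ whose interior contains this compact set for \emph{every} admissible deformation. Using compatible tubular neighbourhoods (Thom--Mather control data) of the strata $V_j\subset\overline{V_i}$, available from the Whitney/subanalytic structure, I take $W$ to be $V_i\cap B_\eps$ with small tubes around the lower strata removed. Its boundary $\partial W$ then splits into an outer part contained in $V_i\cap S_\eps$ and inner parts lying on the boundaries of these tubes (the links of the $V_j$ inside $V_i$). By the Poincar\'e--Hopf theorem for manifolds with boundary, $n_i(\widetilde{X})$ equals the relative Euler number of $TV_i$ with respect to the nowhere-zero boundary section $\widetilde{X}_{\vert\partial W}$; this integer is the obstruction to extending $\widetilde{X}_{\vert\partial W}$ to a nowhere-zero field over $W$, and so depends only on the homotopy class of $\widetilde{X}_{\vert\partial W}$ through nowhere-zero sections.

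It remains to compare two deformations $\widetilde{X}^{(1)}$ and $\widetilde{X}^{(2)}$, i.e.\ to show that their restrictions to $\partial W$ are homotopic through nowhere-zero fields, whence the two relative Euler numbers, and thus $n_i(\widetilde{X}^{(1)})$ and $n_i(\widetilde{X}^{(2)})$, agree. On the outer part of $\partial W$ both restrictions equal $X_{\vert V_i}$, so the straight-line homotopy is constant there. On the inner parts, property (3) forces, for both deformations, the component of the field normal to $V_j$ to point strictly out of the tube; since the outward-transverse vectors form a convex cone, the straight-line homotopy $(1-t)\widetilde{X}^{(1)}_{\vert V_i}+t\,\widetilde{X}^{(2)}_{\vert V_i}$ retains a nonzero normal component and is therefore nowhere zero on the inner boundary. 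Hence the two boundary restrictions are homotopic through nowhere-zero fields, as required. The $1$-form case is identical, with $TV_i$ replaced by $T^*V_i$ and the outward-transversality of the normal component replaced by the defining positivity of a radial $1$-form on outward-pointing tangent vectors, a condition again preserved under convex combinations.

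The step I expect to be the main obstacle is precisely the behaviour near the frontier $\overline{V_i}\setminus V_i$: constructing a single compact domain $W$ with smooth boundary that works for all admissible deformations, and verifying that on its inner boundary every such deformation restricts to a nowhere-zero field with the prescribed outward-transverse normal component. This is where the radial-extension condition (3) and the control data of the stratification must be used in an essential way; away from the frontier the argument is the standard homotopy invariance of the index on a manifold with boundary.
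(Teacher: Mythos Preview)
Your argument is correct and is essentially the paper's first proof: remove small tubes around the lower strata from $V_i\cap B_\eps$ to obtain a manifold with boundary $W_i$, observe that any two admissible deformations coincide with $X$ on the outer boundary and, by the radial-extension condition, are each homotopic (through nowhere-zero fields) to an inward-pointing field on the inner boundary, and conclude by Poincar\'e--Hopf. The paper also records a second, shorter argument you may find useful: the number of singular points of $\widetilde{X}$ on $V_i$ equals $\indrad(X;\overline{V}_i,0)-\indrad(X;\overline{V}_i\setminus V_i,0)$, a difference of radial indices on closed subanalytic subvarieties, each of which is already known to be independent of the deformation.
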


\begin{proof}
We shall write the proof in terms of vector fields. The proof for 1-forms is the same. We shall follow this scheme in the sequel as well.

Let $\widetilde{X}_1$ and $\widetilde{X}_2$ be two deformations of $X$ with the necessary properties.
Let $W_i$ be the complement in $V_i$ of a small tubular neighbourhood of its boundary $\partial V_i$.
The intersection $W_i\cap B_\eps$ (after an appropriate smoothing at $\partial W_i \cap S_\eps$) is
a manifold with boundary. The vector fields $\widetilde{X}_1$ and $\widetilde{X}_2$ coincide on the part
$W_i\cap S_\eps$ of the boundary of $W_i$. On the remaining part these vector fields are homotopic,
in the class of non-vanishing vector fields, to vector fields pointing inside $W_i$ and therefore are homotopic
to each other (in the same class). This implies that the numbers of the singular points of the vector fields
$\widetilde{X}_1$ and $\widetilde{X}_2$ on $W_i$ counted with multiplicities coincide.

Another argument for the statement can be the following one.
The closure $\overline{V}_i$ of the stratum $V_i$ and its boundary $\overline{V}_i\setminus V_i$
are closed subanalytic varieties. The numbers of the singular points of the vector field $\widetilde{X}$
on $\overline{V}_i$ and on $\overline{V}_i\setminus V_i$ (counted with multiplicities) are equal
to the indices $\indrad(X;\overline{V}_i,0)$ and $\indrad(X;{\overline{V}_i\setminus V_i,0})$ respectively
(and therefore are well-defined). The number of the singular points of $\widetilde{X}$
on ${V}_i$ is the difference 
$$
\indrad(X;\overline{V}_i,0)-\indrad(X;{\overline{V}_i\setminus V_i,0}).
$$
\end{proof}

This means that the radial index can be defined for each stratum $V_i$ so that
$$
\indrad(X;V,0)=\sum_{i=1}^q \indrad(X; V_i,0)\,.
$$
Moreover the radial index can be defined as a linear map from the set of constructible,
integer valued functions on $V$ constant on each stratum $V_i$ to $\ZZ$. (The radial index
$\indrad(X;V,0)$ is the image of the constant function $1$.)

There is also the notion of the GSV-index of a vector field or a 1-form on a complex analytic complete intersection singularity. 
We shall discuss this notion (and therefore its $G$-equivariant version below)  in the generality considered in \cite{BSS}.
Let $f_1, \ldots, f_k$ be holomorphic
function germs on $(\CC^n,0)$ defining a complete intersection singularity 
$(V,0)=\{f_1=\ldots=f_k=0\}$. We assume that a neighbourhood of the origin in  $\CC^n$ permits a Whitney stratification adapted to V and satisfying the Thom $a_f$ condition. This holds, in particular, if $(V,0)$ is an
isolated complete intersection singularity (ICIS) or if it is a hypersurface, i.e.\ $k=1$. Let $F$ be the map
$F=(f_1, \ldots, f_k): (\CC^n,0)\to(\CC^k,0)$ and let $\Delta\subset (\CC^k,0)$ be the discriminant of $F$.
For $0<\delta\ll\eps$ small enough the restriction of the map $F$ to $B_{\eps}^{2n}$ is a locally trivial fibration
over $B_{\delta}^{2k}\setminus\Delta$ ($B_{\eps}^{2n}$ and $B_{\delta}^{2k}$ are the closed balls
of radii $\eps$ and $\delta$ around the origin in $\CC^n$ and in $\CC^k$ respectively.).
For $\underline{\delta}=(\delta_1,\ldots, \delta_k)\in B_{\delta}^{2k}\setminus\Delta$, the fibre
$M_{\underline{\delta}}={F}^{-1}(\underline{\delta})\cap B_{\eps}^{2n}$ of this fibration is a non-singular complex manifold with the boundary $\partial M_{\underline{\delta}}=F^{-1}(\underline{\delta})\cap S_{\eps}^{2n-1}$~---
the {\em Milnor fibre} of $F$ or of $(V,0)$ (the set $S_{\eps}^{2n-1}=\partial B_{\eps}^{2n}$
is the sphere of radius $\eps$ around the origin in $\CC^n$).

Let $X$ be a stratified vector field on $(V,0)$ with an isolated singular point at the origin. In particular,
the restriction of $X$ to a neighbourhood of $V\cap S_{\eps}^{2n-1}$ has no zeros.
In \cite{BSS} it is explained how one can construct a vector field $\widehat{X}$ on a neighbourhood of the boundary
$\partial M_{\underline{\delta}}$ of the Milnor fibre approximating $X$ on a neighbourhood of
$V\cap S_{\eps}^{2n-1}$. Two approximations of this sort
are close to each other and therefore homotopic to each other in the class of non-vanishing vector
fields. Let $\widetilde{X}$ be an extension of  $\widehat{X}$ to the Milnor fibre $M_{\underline{\delta}}$
with isolated singular points.

The GSV-{\em index} of the vector field $X$ is
\begin{equation*}\label{GSV}
\indGSV(X;V,0)=
\sum_{p \in\Sing \widetilde{X}}
\ind(\widetilde{X}; M_{\underline{\delta}},p)\,.
\end{equation*}

\section{Burnside rings and Euler characteristics}\label{sect2}

Let $G$ be a finite group. Let $\Conjsub G$ be the set of conjugacy classes of subgroups of $G$.
A $G$-set is a set with an action of the group $G$.
A $G$-set is {\em irreducible} if the action of $G$ on it is transitive.
Isomorphism classes of irreducible $G$-sets are in one-to-one correspondence with
the elements of $\Conjsub G$: to the conjugacy class $[H]$ containing
a subgroup $H\subset G$ one associates the isomorphism class $[G/H]$
of the $G$-set $G/H$. The {\em Burnside ring} $B(G)$ of $G$ is the
Grothendieck ring of finite $G$-sets, i.e.\
the abelian group generated by the isomorphism classes of finite $G$-sets
modulo the relation $[A\amalg B]=[A]+[B]$ for finite $G$-sets $A$ and $B$. The multiplication
in $B(G)$ is defined by the 
cartesian product. As an abelian group, $B(G)$
is freely generated by the isomorphism classes of irreducible $G$-sets,
i.e.\ each element of $B(G)$ can be written in a unique way as 
$\sum\limits_{[H]\in \Conjsub G}a_{[H]}[G/H]$ with $a_{[H]}\in\ZZ$.
The element $1$ in
the ring $B(G)$ is represented by the $G$-set $[G/G]$ consisting of one point (with the trivial $G$-action).

There is a natural homomorphism from the Burnside ring $B(G)$ to the ring $R(G)$
of representations of the group $G$ which sends a $G$-set $X$ to the (vector)
space of functions on $X$. If $G$ is cyclic, then this homomorphism is injective.
In general, it is neither injective nor surjective.

For a subgroup $H\subset G$ there are natural maps
$\mbox{R}_{H}^{G}: B(G)\to B(H)$ and $\mbox{I}_{H}^{G}: B(H)\to B(G)$.
The {\em restriction map} $\mbox{R}_{H}^{G}$ sends a $G$-set X to the same set considered with the $H$-action.
The {\em induction map} $\mbox{I}_{H}^{G}$ sends an $H$-set $X$ to the product $G\times X$ factorized
by the natural equivalence: $(g_1,
 x_1)\equiv (g_2, x_2)$ if there exists $g\in H$ such that
$g_2=g_1g$, $x_2=g^{-1}x_1$ with the natural (left) $G$-action. Both maps are group homomorphisms,
however the induction map $\mbox{I}_{H}^{G}$ is not a ring homomorphism. 

For an action of a group $G$ on a set $X$ and for a point $x\in X$, let $G_x=\{g\in G: gx=x\}$
be the isotropy group of the point $x$. For a subgroup $H\subset G$ let
$X^H=\{x\in X: Hx=x\}$ be the fixed point set of the subgroup $H$
and let $X^{(H)}=\{x\in X: G_x=H\}$ be the set of points with the isotropy group $H$.
For a conjugacy class $[H]\in \Conjsub G$, let
$X^{[H]}=\bigcup\limits_{K\in [H]}X^{K}$,
$X^{([H])}=\bigcup\limits_{K\in [H]}X^{(K)}$.

For a ``sufficiently good'' topological space $V$, say, a subanalytic variety,
its Euler characteristic is
$$
\chi(V)=\sum_i (-1)^i \dim H^i_c(V;\RR),
$$
where $H^i_c(V;\RR)$ are the cohomology groups with compact support. This Euler characteristic
is additive (in contrast to the one defined via the usual cohomology groups).
For quasi-projective complex analytic varieties both these Euler characteristics coincide. 

Let $G$ be a finite group acting on $V$.

\begin{definition}
 The {\em equivariant Euler characteristic} of $(V,G)$ is the element of the Burnside ring of $G$
 defined by
 $$
 \chi^G(V)=\sum_{{[H]}\in\Conjsub G}\chi(V^{([H])}/G)[G/H]
 $$
 (see, e.g., \cite{Verdier, TtD}). The {\em reduced equivariant Euler characteristic} of $(V,G)$
 is
 $$
 {\overline{\chi}}^G(V)=\chi^G(V)-[G/G]\,.
 $$
\end{definition}

\begin{remark} For a subgroup $H$ from a conjugacy class $[H]$, the $G$-action
 on $V$ induces an $N_G(H)$-action on $V^H$ ($N_G(H)$ is the normalizer $\{g\in G: g^{-1}Hg=H\}$ of $H$)
 and $V^{([H])}/G=V^{({H})}/N_G(H)$. Therefore
$$
\chi^G(V)=\sum\limits_{{[H]}\in\Conjsub G}\chi(V^{(H)}/N_G(H))[G/H] .
$$
\end{remark} 

\begin{remark}
 One can see that the natural homomorphism from $B(G)$ to the ring $R(G)$ of representations of $G$
described above sends the equivariant Euler characteristic $\chi^G(V)$ to the equivariant Euler characteristic $\chi_G(V)\in R(G)$ considered, e.g., in \cite{Wall1}.
\end{remark}

There is a natural homomorphism $\vert\cdot\vert$ from the Burnside ring $B(G)$ to $\ZZ$ sending a (virtual) $G$-set
$A$ to the number of elements $\vert A\vert$ of $A$. This homomorphism sends the equivariant Euler characteristic
$\chi^G(V)$ to the usual Euler characteristic $\chi(V)$.

The {\em orbifold Euler characteristic} of the pair $(V,G)$ is defined by
$$ 
\chi^{orb}(V,G)=\frac{1}{\vert G\vert}\sum_{(g,h):gh=hg} \chi(X^{\langle g,h\rangle}) 
$$ 
where $\langle g,h\rangle$ is the subgroup of $G$ generated by $g$ and $h$ (see \cite{Vafa, HH, AS}). 

There are {\em higher order Euler characteristics} of the pair $(V,G)$ defined by
$$ 
\chi^{(k)}(V,G)=\frac{1}{\vert G\vert}
\sum_{{{\bf g}\in G^{k+1}:}\atop{g_ig_j=g_jg_i}} \chi(X^{\langle g_0, g_1, \ldots, g_k\rangle}), \quad k\ge 0
$$ 
where ${\bf g}=(g_0,g_1, \ldots, g_k)$, $\langle g_0, g_1, \ldots, g_k\rangle$ is the subgroup 
generated by $g_0,g_1, \ldots, g_k$ (see \cite{AS, BF}). The zeroth order Euler characteristic $\chi^{(0)}(V,G)$ of the pair $(V,G)$ is nothing else but the Euler characteristic
of the quotient $V/G$, the first order Euler characteristic $\chi^{(1)}(V,G)$ of the pair $(V,G)$ is just the orbifold one.
All these Euler characteristics are additive functions on the Grothendieck ring of subanalytic $G$-varieties.

There are homomorphisms $r_G^{(k)}$ from $B(G)$ as an abelian group to $\ZZ$ which send the equivariant Euler
characteristic $\chi^G(V)$ to the higher order Euler characteristics $\chi^{(k)}(V,G)$, $k\ge 0$. The homomorphism
$r_G^{(k)}$ is defined by $r_G^{(k)}([G/H])=\chi^{(k)}([G/H],G)$. The homomorphism $r_G^{(0)}$ sends the class $[A]$
of a finite $G$-set to the number of $G$-orbits in $A$. For an abelian group $G$ one has
$\chi^{(k)}([G/H],G)=\vert H\vert^k$. Therefore one can say that the higher order Euler characteristics are reductions of the equivariant Euler characteristic which is a more general one. The homomorphism $r_G^{(k)}$ is not, in general, a ring homomorphism. It is a ring homomorphism for abelian $G$ and $k=1$ (i.e.\ for
the orbifold Euler characteristic).

Below we construct $G$-equivariant versions of some integer valued invariants as elements of the
Burnside ring $B(G)$. They possess the property that the ``number of elements'' homomorphism $\vert\cdot\vert$
sends them to the original invariants (i.e.\ for trivial $G$). Applying the homomorphism $r_G^{(k)}$ one gets an orbifold version of each invariant (for $k=1$)
and also higher order integral invariants. (This was used in \cite{GLM} to define orbifold versions of the Lefschetz number.)

\section{Equivariant radial index}\label{sect3}
We introduce the notion of an equivariant radial index for vector fields. The necessary changes for 1-forms are clear.

Let $(V,0)$ be a germ of a closed real subanalytic variety with an action
of a finite group $G$. (We assume $(V,0)$ to be embedded into $(\RR^N, 0)$
and the $G$-action to be induced by an analytic action on a neighbourhood of
$(V,0)$ in $(\RR^N, 0)$. This essentially means that the action on $(\RR^N, 0)$
can be assumed to be linear.) 
Let $V=\bigcup_{i=1}^q V_i$ be a subanalytic Whitney
$G$-stratification of $V$. This means that each stratum $V_i$ is $G$-invariant,
the isotropy subgroups $G_p=\{g\in G: gp=p\}$ of all points $p$ of $V_i$ are conjugate to each other,
and the quotient of the stratum by the group $G$
is connected. (Such a stratification can be obtained from an arbitrary subanalytic Whitney
stratification of $(V,0)$ by taking intersections/unions of the strata and
of their transforms by the elements of $G$.)

Let $X$ be a $G$-invariant stratified vector field on $(V,0)$ with an isolated singular point
(zero) at the origin. One can show that there exists a (continuous) $G$-invariant
stratified vector field $\widetilde{X}$ on $V$ satisfying conditions (1)--(3) from Section~\ref{sect1}.

Let $A$ be the set (a $G$-set) of the singular points of the vector field
$\widetilde{X}$ on $V\cap B_\eps$ considered with the multiplicities equal to
the usual indices $\ind(\widetilde{X}_{\vert V_{(p)}}; V_{(p)},p)$ of the restrictions
of the vector field $\widetilde{X}$ to the corresponding strata (smooth manifolds).

\begin{definition}
 The {\em equivariant radial index} $\indrad^G(X; V,0)$ of the vector field $X$
 on $V$ at the origin is the class $[A]\in B(G)$ of the set $A$ of singular
 points of $\widetilde{X}$ with multiplicities.
\end{definition}

\begin{remark}
 One can rewrite the definition as
\begin{equation*}\label{Defrad}
\indrad^G(X; V,0)=\sum_{\overline{p}\in ({\Sing}\widetilde{X})/G} 
\ind(\widetilde{X}_{\vert V_{(p)}}; V_{(p)},p) [Gp]\,, 
\end{equation*}
 where $p$ is a point of the preimage of $\overline{p}$ under the canonical projection.
\end{remark}

For a subgroup $H\subset G$, the vector field $X$ is $H$-invariant and one has
$\indrad^H(X; V,0)=R^G_H(\indrad^G(X; V,0))$.

The homomorphisms $r_G^{(k)}$ give "higher order Euler characteristic versions" of the equivariant index:
$\indrad^{G,(k)}(X; V,0)=r_G^{(k)}(\indrad^G(X; V,0))$. For $k=1$ one gets an orbifold index of $X$.

For a stratum $V_i$ of the stratification,
let $[G_i]\in \Conjsub G$ be the conjugacy class of the isotropy subgroups of points
of $V_i$, and let $G_i$ be a representative of it. Singular points of the vector field $\widetilde{X}$
on the stratum $V_i$ are distributed between the orbits of the $G$-action, all of which (as $G$-sets)
are isomorphic to $[G/G_i]$. Therefore
\begin{equation}\label{equi1}
\indrad^G(X; V,0)= \sum_{i=1}^q \frac{\vert G_i\vert}{\vert G\vert}\ind(X;V_i,0) [G/G_i]
\end{equation}
(see Proposition~\ref{prop-distr}).
 
Let us write the equivariant index $\indrad^G(X; V,0)$ in terms of the usual (non-equivariant)
indices of the vector field $X$ on the fixed point sets $V^H$ of subgroups of $G$ or
on the ``fixed point sets'' $V^{[H]}=\bigcup\limits_{K\in [H]}V^K$
of conjugacy classes of subgroups of $G$.

Assume that
\begin{equation}\label{equi2}
 \indrad^G(X; V,0) = \sum_{[H]\in\Conjsub G} a_{[H]} [G/H]\,.
\end{equation}
One has
\begin{equation*}
 \indrad(X;V,0) = \sum_{[H]\in\Conjsub G} a_{[H]} \frac{\vert G\vert}{\vert H\vert} \,.
\end{equation*}
Equation~\ref{equi2} implies that
\begin{equation*}
 \indrad^G(X;V^{[H]},0) = \sum_{[K]\ge [H]} a_{[K]} [G/K]
\end{equation*}
and therefore
\begin{equation}\label{equi3}
\indrad(X;V^{[H]},0)  =
 \sum_{[K]\ge[H]} a_{[K]} \frac{\vert G\vert}{\vert K\vert}\,.
\end{equation}
Equation~\ref{equi3} can be rewritten as 
\begin{equation}\label{equi4}
\indrad(X;V^{[H]},0) =
 \sum_{[K]\in\Conjsub G} \zeta([H],[K])a_{[K]}
 \frac{\vert G\vert}{\vert K\vert}\,,
\end{equation}
 where
\begin{equation*}
 \zeta([H],[K])=\left\{ \begin{array}{cl} 0 & \mbox{for\ }[K]\not\ge[H],\\
 1 & \mbox{for\ }[K]\ge [H] \end{array} \right.
\end{equation*}
is the zeta function of the partially ordered set $\Conjsub G$. (One has $[K]\ge [H]$
if there exist representatives $K$ and $H$ of $[K]$ and $[H]$ respectively such that
$K\supset H$.)
The system~(\ref{equi4}) is a triangular system of linear equations with respect to $a_{[H]}$.
Its solution gives the following statement.

\begin{proposition}
One has 
\begin{equation*}
 \indrad^G(X; V,0) = \sum_{[H]\in\Conjsub G}\frac{\vert H\vert}{\vert G\vert}
\left( \sum_{[K]\in\Conjsub G}\mu([H], [K]) \indrad(X;V^{[K]},0) \right)[G/H]\,,
\end{equation*}
where $\mu([H],[K])$ is the Moebius function of the poset $\Conjsub G$ (see, e.g, \cite{Hall}).
\end{proposition}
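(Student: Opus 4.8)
The plan is to treat the displayed formula simply as the solution of the triangular linear system~(\ref{equi4}) and to obtain it by Moebius inversion on the finite poset $\Conjsub G$. Writing $a_{[H]}$ for the coefficients of $\indrad^G(X;V,0)$ as in~(\ref{equi2}), I would first pass to the renormalized unknowns $b_{[K]}:=\frac{\vert G\vert}{\vert K\vert}\,a_{[K]}$. With this substitution the system~(\ref{equi4}) reads
$$
\indrad(X;V^{[H]},0)=\sum_{[K]\in\Conjsub G}\zeta([H],[K])\,b_{[K]}=\sum_{[K]\ge[H]}b_{[K]}\,,
$$
i.e.\ the known quantities $\indrad(X;V^{[H]},0)$ are obtained from the unknowns $b_{[K]}$ by applying the zeta matrix $(\zeta([H],[K]))_{[H],[K]}$ of the poset $\Conjsub G$.

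Next I would record that this system is genuinely invertible. Choosing any linear extension of the partial order on $\Conjsub G$ to a total order, the matrix $(\zeta([H],[K]))$ becomes triangular with diagonal entries $\zeta([H],[H])=1$, hence unimodular. By definition the Moebius function $\mu$ of $\Conjsub G$ is the inverse of $\zeta$ in the incidence algebra, characterized by $\sum_{[L]}\mu([H],[L])\,\zeta([L],[K])=\delta_{[H],[K]}$, and it vanishes, $\mu([H],[K])=0$, whenever $[K]\not\ge[H]$.

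The third step is the inversion itself. Applying the Moebius function to the relation above gives
$$
b_{[H]}=\sum_{[K]\ge[H]}\mu([H],[K])\,\indrad(X;V^{[K]},0)=\sum_{[K]\in\Conjsub G}\mu([H],[K])\,\indrad(X;V^{[K]},0)\,,
$$
where the last equality extends the summation to all of $\Conjsub G$ using $\mu([H],[K])=0$ for $[K]\not\ge[H]$. Undoing the renormalization by $a_{[H]}=\frac{\vert H\vert}{\vert G\vert}\,b_{[H]}$ and substituting into $\indrad^G(X;V,0)=\sum_{[H]\in\Conjsub G}a_{[H]}[G/H]$ then reproduces the claimed formula verbatim.

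I do not expect a genuine obstacle here: the content beyond~(\ref{equi4}) is purely the inversion of a triangular integer matrix, and everything reduces to bookkeeping. The one point deserving care is the direction of the inversion. The incidence relation sums over the upward cone $[K]\ge[H]$, so one must use the variant of Moebius inversion that recovers $f([H])$ from $g([H])=\sum_{[K]\ge[H]}f([K])$ rather than the more familiar downward version; correspondingly the Moebius function occurring in the answer is $\mu([H],[K])$ (with the first argument below the second) for the order on $\Conjsub G$ defined in the excerpt, where $[K]\ge[H]$ means that $K$ contains a conjugate of $H$.
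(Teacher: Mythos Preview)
Your proposal is correct and is exactly the approach the paper takes: the paper simply remarks that system~(\ref{equi4}) is triangular in the $a_{[H]}$ and states the proposition as its solution, without writing out the inversion. Your write-up is merely a more explicit version of the same Moebius-inversion bookkeeping.
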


In a similar way one has
\begin{equation}\label{equi5}
 \indrad(X;V^{H},0) =
 \sum_{K\ge H} a_{[K]} \frac{\vert N_G(K)\vert}{\vert K\vert}\,,
\end{equation}
where $N_G(K)$ is the normalizer $\{g\in G:g^{-1}Kg=K\}$ of the subgroup $K$.
This can be rewritten as 
\begin{equation}\label{equi6}
 \indrad(X;V^{H},0) =
 \sum_{K\in\Sub G} \zeta'(H,K)a_{[K]}
 \frac{\vert N_G(K)\vert}{\vert K\vert}\,,
\end{equation}
 where
$\zeta'(H,K)$
is the zeta function of the partially ordered set $\Sub G$ of subgroups of $G$.
Thus one gets the following statement.

\begin{proposition} \label{propsubgroups}
One has 
\begin{equation*}
 \indrad^G(X;V,0) = \sum_{[H]\in\Conjsub G}\frac{\vert H\vert}{\vert N_G(H)\vert}
 \left(\sum_{K\in\Sub G}\mu'(H,K) \indrad(X;V^{K},0)\right)[G/H]\,,
\end{equation*}
where $\mu'(H,K)$ is the Moebius function of the poset $\Sub G$.
\end{proposition}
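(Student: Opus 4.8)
The plan is to read off the coefficients $a_{[H]}$ in (\ref{equi2}) by inverting the triangular system (\ref{equi6}), exactly as the previous proposition was obtained from (\ref{equi4}), but now over the poset $\Sub G$ of all subgroups rather than over $\Conjsub G$. Starting from $\indrad^G(X;V,0)=\sum_{[H]\in\Conjsub G}a_{[H]}[G/H]$, I introduce the function
\[
b_H:=a_{[H]}\,\frac{\vert N_G(H)\vert}{\vert H\vert}
\]
on $\Sub G$; this is well defined (and is a class function) because $\vert H\vert$ and $\vert N_G(H)\vert$ are conjugation invariant and $a_{[H]}$ depends only on $[H]$. In this notation (\ref{equi6}) becomes $\indrad(X;V^H,0)=\sum_{K\in\Sub G}\zeta'(H,K)\,b_K$, so that the known fixed-point indices $\indrad(X;V^H,0)$ are precisely the $\zeta'$-transform of the unknowns $b_K$.

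First I would invoke the defining property of the Moebius function $\mu'$ of the finite (hence locally finite) poset $\Sub G$: it is the two-sided convolution inverse of $\zeta'$ in the incidence algebra, i.e. $\sum_{H\le J\le K}\mu'(H,J)\zeta'(J,K)=\delta(H,K)$. Because $\zeta'(H,K)=0$ unless $H\le K$, the system is triangular and invertible, and the standard Moebius inversion over the up-sets of $\Sub G$ gives
\[
b_H=\sum_{K\in\Sub G}\mu'(H,K)\,\indrad(X;V^K,0).
\]
Solving for $a_{[H]}=\frac{\vert H\vert}{\vert N_G(H)\vert}\,b_H$ and substituting back into (\ref{equi2}) yields exactly the asserted formula.

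Before substituting I would verify that the right-hand side is genuinely a function of the class $[H]$, so that the outer sum over $\Conjsub G$ is meaningful. Conjugation by $g\in G$ is a poset automorphism of $\Sub G$ preserving $\mu'$ and sending $H\mapsto gHg^{-1}$; since $X$ is $G$-invariant one has $V^{gKg^{-1}}=gV^K$ and $\indrad(X;gV^K,0)=\indrad(X;V^K,0)$, while $\vert gHg^{-1}\vert=\vert H\vert$ and $\vert N_G(gHg^{-1})\vert=\vert N_G(H)\vert$. Hence both the inner sum and the prefactor $\vert H\vert/\vert N_G(H)\vert$ are constant on each conjugacy class, as needed.

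The only input beyond formal poset combinatorics is the triangular relation (\ref{equi5})--(\ref{equi6}) itself, which we take as already established; granting it, the argument is just the Moebius inversion of the previous proposition transported to the finer poset $\Sub G$. I therefore expect no real obstacle, and the one point demanding care is the bookkeeping: $\zeta'$ and $\mu'$ live on $\Sub G$ (individual subgroups), whereas $\indrad^G(X;V,0)$ is organized as a sum over $\Conjsub G$, and the normalizer factor $\vert N_G(K)\vert/\vert K\vert$ in (\ref{equi6}) is exactly what converts between the two indexings.
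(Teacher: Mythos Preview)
Your proposal is correct and follows essentially the same route as the paper: the paper's argument is precisely to set up the triangular system (\ref{equi5})--(\ref{equi6}) over $\Sub G$ and then declare ``Thus one gets the following statement,'' i.e.\ invert by Moebius inversion with $\mu'$. Your introduction of $b_H=a_{[H]}\vert N_G(H)\vert/\vert H\vert$ and your explicit check that the resulting expression is constant on conjugacy classes make the bookkeeping cleaner than in the paper, but the underlying idea is identical.
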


Let $V$ be a locally compact subanalytic $G$-variety 
and let $X$ be a $G$-invariant (stratified with respect to a stratification) vector field on $V$ with isolated
singular points. Let $p$ be a singular point of $X$. The germ $(V,p)$ caries an action of the isotropy subgroup $G_p$
of $p$. Therefore the index $\indrad^{G_p}(X;V,p)\in B(G_p)$ is defined.

\begin{definition}
The {\em $G$-equivariant index} $\indrad^G(X;V,Gp)$ of the vector field $X$ at the orbit $Gp$ is
 $I_{G_p}^G (\indrad^{G_p}(X;V,p)) \in B(G)$.
\end{definition}

One can see that the $G$-equivariant index of a vector field (and of a 1-form as well)
satisfies the law of conservation of number:
for a ($G$-invariant) deformation $\widetilde{X}$ of a vector field $X$ the sum of the $G$-equivariant
indices of $\widetilde{X}$ at the orbits in a neighbourhood of an orbit $Gp$ is equal to
$\indrad^G(X; V,Gp)$.

One has the following equivariant version of the Poincar\'e-Hopf Theorem.

\begin{theorem}
Let $V=\bigcup_{i=1}^q V_i$ be a compact subanalytic variety and let $X$ be a $G$-invariant stratified vector
field on $V$ with isolated singular points. Then the sum of the indices of the vector field $X$ over all singular orbits
is equal to the $G$-equivariant Euler characteristic of $V$:
$$
\sum_{\overline{p}\in (\Sing X)/G} \indrad^G(X;V,\pi^{-1}(\overline{p})) =\chi^G(V)\in B(G)
$$
($\pi:V\to V/G$ is the canonical projection).
\end{theorem}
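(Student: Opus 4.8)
The plan is to reduce the equivariant Poincar\'e--Hopf statement to the ordinary one on fixed-point loci, using the fact (established earlier) that both sides live in the Burnside ring $B(G)$, which is freely generated as an abelian group by the $[G/H]$. Since an element of $B(G)$ is determined by its coefficients $a_{[H]}$, and since Proposition~\ref{propsubgroups} shows that these coefficients are recovered from the ordinary radial indices $\indrad(X;V^{K},0)$ via M\"obius inversion over $\Sub G$, it suffices to check the identity after evaluating the ordinary (non-equivariant) radial index on each fixed-point set $V^{H}$. In other words, I would show that for every subgroup $H\subseteq G$,
\begin{equation*}
\indrad\Bigl(X;\bigl(\textstyle\sum_{\overline{p}}\indrad^G(X;V,\pi^{-1}(\overline{p}))\bigr)\text{-supporting }V^{H},0\Bigr)=\chi(V^{H}),
\end{equation*}
and then invoke the uniqueness of the coefficient expansion to conclude equality in $B(G)$.

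First I would apply the ``number of elements'' homomorphism $|\cdot|$ and, more generally, the restriction maps $R^G_H$. Using the compatibility $R^G_H(\indrad^G(X;V,0))=\indrad^H(X;V,0)$ noted after the definition of the equivariant radial index, the left-hand side restricted to $H$ and then evaluated by $|\cdot|$ should collapse to a sum of ordinary indices of $X|_{V^{H}}$ on the smooth strata of the fixed-point variety $V^{H}$. Here the key geometric input is that a $G$-invariant deformation $\widetilde{X}$ satisfying conditions (1)--(3) restricts, on the fixed-point set $V^{H}$, to a deformation with the analogous properties, so that its singular points on $V^{H}$ compute $\indrad(X;V^{H},0)$. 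Second, I would invoke the classical (non-equivariant) Poincar\'e--Hopf theorem for the radial index on the compact subanalytic variety $V^{H}$: the sum of the radial indices of $X|_{V^H}$ equals $\chi(V^{H})$. This is where one uses that the radial index localizes the Euler characteristic, as recalled in the Introduction and Section~\ref{sect1}.

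Third, I would match these fixed-point Euler characteristics with the coefficients of $\chi^G(V)$ as given in the Definition and Remark: $\chi^G(V)=\sum_{[H]}\chi(V^{(H)}/N_G(H))[G/H]$, together with the decomposition $V^{H}=\bigsqcup_{K\supseteq H}V^{(K)}$ and additivity of $\chi$. This yields the relation $\chi(V^{H})=\sum_{K\ge H}\chi(V^{(K)})$, which is exactly the ordinary-index side of the system~(\ref{equi5})--(\ref{equi6}) that Proposition~\ref{propsubgroups} inverts. Thus the coefficients extracted from the left-hand side via M\"obius inversion coincide term by term with those of $\chi^G(V)$.

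The main obstacle I expect is the second step: justifying carefully that the ordinary radial-index Poincar\'e--Hopf identity holds on each fixed-point variety $V^{H}$ with the \emph{same} $G$-invariant vector field $X$, i.e.\ that restricting a $G$-invariant stratified radial-extension deformation to $V^{H}$ produces a legitimate radial-type deformation on $V^{H}$ whose singular-point count is $\chi(V^H)$. One must check that $V^{H}$ inherits a Whitney stratification (its strata being the $V_i^{H}$), that $X|_{V^H}$ has isolated singularities there, and that the radial-extension structure transverse to a stratum is compatible with passing to $H$-fixed directions, so that the local indices agree. Once this compatibility is secured, the rest is the purely combinatorial M\"obius inversion already set up in the preceding propositions, and equality in $B(G)$ follows from the freeness of the $[G/H]$-basis.
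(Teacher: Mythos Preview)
Your approach is correct but follows a genuinely different route from the paper. The paper argues directly, stratum by stratum: after choosing a $G$-invariant deformation $\widetilde{X}$, it writes both sides as $\sum_{i=1}^q \frac{|G_i|}{|G|}(\,\cdot\,)[G/G_i]$ and reduces the equality of the coefficient at $V_i$ to the elementary fact that on a compact manifold with boundary a vector field pointing inwards along the boundary has total index $\chi(\text{interior})$ (applied to $W_i=V_i$ minus a small tubular neighbourhood of $\partial V_i$). Your argument instead tests the identity via the mark homomorphism: you check it on each fixed-point variety $V^H$ and invoke the full non-equivariant Poincar\'e--Hopf theorem for the radial index on the compact subanalytic space $V^H$, then recover equality in $B(G)$ by M\"obius inversion. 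The obstacle you flag is genuine but easily resolved once you notice that the isotropy group is \emph{constant} (not merely constant up to conjugacy) on each connected component of a stratum; hence near any $H$-fixed point $q$ one has $V_{(q)}\cap V^H=V_{(q)}$ locally, so the stratum index on $V$ and on $V^H$ agree at $q$, and an invariant radial extension restricts to a radial extension on $V^H$. Your route is more conceptual and exhibits the equivariant statement as the package of ordinary statements on all $V^H$; the paper's route is shorter and uses only the manifold-with-boundary version of Poincar\'e--Hopf rather than the singular-variety version as a black box.
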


An analogous statement holds for 1-forms.

\begin{proof}
Let $\widetilde{X}$ be a $G$-invariant deformation of $X$ which differs from $X$ only in neighbourhoods of
singular points and which satisfies
 the properties of the definition of the equivariant index. One has
\begin{eqnarray*}
\sum_{\overline{p}\in (\Sing X)/G} \indrad^G(X,V,\pi^{-1}(\overline{p})) &=&
\sum_{\overline{p}\in \Sing X/G} \indrad^G(\widetilde{X};V,\pi^{-1}(\overline{p})) \\
& = & 
\sum_{i=1}^q \sum_{p\in V_i}\frac{\vert G_i\vert}{\vert G\vert} \ind(\widetilde{X};V_i,p) [G/G_i]\,, \\
\chi^G(V) &= & \sum_{i=1}^q \frac{\vert G_i\vert}{\vert G\vert} \chi(V_i)[G/G_i]\,.
\end{eqnarray*}

As in the proof of Proposition~\ref{prop-distr}, let $W_i$ be the complement in $V_i$ of a small tubular neighbourhood
of its boundary $\partial V_i$. It can be regarded as a manifold with boundary. On the boundary 
$\partial W_i$ the restriction
$\widetilde{X}_{\vert W_i}$ of the vector field $\widetilde{X}$ is homotopic, in the
class of non-vanishing vector fields, to a vector field pointing inwards. Therefore 
$$
\sum_{p\in V_i}\ind(\widetilde{X};V_i,p)= \chi(W_i\setminus \partial W_i)=\chi(V_i)\,.
$$
(We use the fact that, on a compact manifold with boundary, the sum of the indices of the
singular points of a vector field
pointing inwards on the boundary is equal to the Euler characteristic of the interior of the manifold.)
\end{proof}

Let $(V,0)$ be a germ of a complex analytic variety of pure dimension $n$ and let $f:(V,0)\to(\CC,0)$ be a germ of a complex analytic function. The germ $f$ is the lifting $\overline{f}\circ \pi$ of an analytic function  germ
$\overline{f}:(V/G,0)\to(\CC,0)$ ($\pi:V\to V/G$ is the canonical projection).
The Milnor fibres $M_{{\rm Re} f}^+$ and $M_{{\rm Re} f}^-$ of ${\rm Re} f$ are $G$-homotopy equivalent to the Milnor fibre
$f^{-1}(\delta)\cap B_\eps$ of $f$ and one has $\indrad^G(df;V,0)=-\overline{\chi}^G(M_f)$.
The element $\mu^G_f=(-1)^{n-1}\overline{\chi}^G(M_f)\in B(G)$
can be regarded as an equivariant version of the Milnor number of $f$. The homomorphism $r_G^{(0)}$ from $B(G)$ to $\ZZ$ which sends $[G/H]$ to 1 maps the element $\mu^G_f$ to the Milnor number of the germ $\overline{f}$ on
$(V/G, 0)$ defined (for smooth $V/G$) in \cite{Wall1}.

Let $(V,0)$ be a germ of a closed subanalytic $G$-variety, let $(V/G,0)$ be the quotient by the $G$-action,
let $\pi:V\to V/G$ be the canonical projection and let $V=\bigcup\limits_{i=1}^q V_i$ and 
$V/G=\bigcup\limits_{i=1}^q V_i/G$ be Whitney stratifications of $V$ and $V/G$ respectively compatible with the
partitions corresponding to different orbit types. 
For a stratum $V_i$, $i=1,\ldots, q$, let $[G_i]$ be the conjugacy class of the isotropy subgroups of points of $V_i$
and let $G_i$ be a representative of it. 
Let $X$ be a $G$-invariant stratified vector field on $V$
with an isolated singular point at $0$. The image of $X$ under the map $\pi_*$ defines a stratified vector field
$\overline{X}$ on $V/G$ with an isolated singular point. 
One can easily see that a product/cone-like structure in a neighbourhood of a point $\overline{p}\in V_i/G$
defines a $G_p$-invariant product/cone-like structure in a neighbourhood of a point $p\in \pi^{-1}(\overline{p})$.
This implies that, if $\widetilde{X}$ is an appropriate (in the sense of the definition of the radial index) deformation of the vector
field $X$, then $\pi_*\widetilde{X}$ is an appropriate deformation of the vector
field $\overline{X}$. Therefore one has the following statement.

\begin{proposition} \label{prop-vect}
$$ 
\indrad^G(X;v,0)=\sum_{[H]\in\Conjsub G} \indrad(\overline{X};V^{([H])}/G, 0)[G/H]\,,
$$
where $\indrad(\overline{X};V^{([H])}/G, 0)$ is the (radial) index of the vector field $\overline{X}$ on the
non-closed variety $V^{([H])}/G$ defined in Section~\ref{sect1}.
\end{proposition}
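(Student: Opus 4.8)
The plan is to reduce the identity to a stratum-by-stratum comparison and then to a purely local computation at each zero of the deformed field. On the left-hand side I would start from the distribution formula~(\ref{equi1}),
$$
\indrad^G(X;V,0)=\sum_{i=1}^q \frac{\vert G_i\vert}{\vert G\vert}\ind(X;V_i,0)\,[G/G_i]\,,
$$
and collect the strata according to their isotropy type: for $[H]\in\Conjsub G$ set $I_{[H]}=\{i:[G_i]=[H]\}$, so that the coefficient of $[G/H]$ equals $\sum_{i\in I_{[H]}}\frac{\vert H\vert}{\vert G\vert}\ind(X;V_i,0)$. On the right-hand side I would use that $V^{([H])}/G=\bigcup_{i\in I_{[H]}}V_i/G$ is a Whitney stratification of the (non-closed) variety $V^{([H])}/G$, together with the additivity of the radial index over strata established in Proposition~\ref{prop-distr}, to write $\indrad(\overline X;V^{([H])}/G,0)=\sum_{i\in I_{[H]}}\ind(\overline X;V_i/G,0)$. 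Since the classes $[G/H]$ form a $\ZZ$-basis of $B(G)$, comparing coefficients shows it suffices to prove for a single stratum the local identity
$$
\ind(\overline X;V_i/G,0)=\frac{\vert G_i\vert}{\vert G\vert}\,\ind(X;V_i,0)\,.
$$

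Here I would invoke the remark preceding the statement, which guarantees that $\pi_*\widetilde X$ is an appropriate deformation of $\overline X$; hence both indices may be computed by counting zeros of $\widetilde X$ on $V_i$ and of $\pi_*\widetilde X$ on $V_i/G$ with their usual multiplicities. The key step is the local model near a zero-orbit. By the slice theorem applied to the $G$-action on $V_i$ at a point $p$ with $G_p=G_i$, a $G$-invariant neighbourhood of the orbit $Gp$ inside $V_i$ is equivariantly diffeomorphic to $G\times_{G_i}S^{G_i}=(G/G_i)\times S^{G_i}$, where $S$ is a linear $G_i$-slice and $G_i$ acts trivially on $S^{G_i}$; since $G$ is finite, $G/G_i$ is a finite set of $\vert G\vert/\vert G_i\vert$ sheets permuted transitively by $G$, and $\pi$ maps each sheet diffeomorphically onto the neighbourhood of $\overline p=\pi(p)$ in $V_i/G\cong S^{G_i}$. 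Consequently a zero of $\widetilde X$ on $V_i$ has a $G$-orbit of exactly $\vert G\vert/\vert G_i\vert$ zeros, all of the same index $\iota$ (the index being a diffeomorphism invariant and $\widetilde X$ being $G$-invariant), and this whole orbit maps to a single zero of $\pi_*\widetilde X$ of index $\iota$. Summing over orbits $O$ gives $\ind(\overline X;V_i/G,0)=\sum_O \iota_O$ while $\ind(X;V_i,0)=\sum_O (\vert G\vert/\vert G_i\vert)\,\iota_O$, which is the desired local identity.

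Assembling these computations over $i\in I_{[H]}$ and over all $[H]\in\Conjsub G$ yields the proposition. The main obstacle I anticipate is the local analysis at zeros with nontrivial isotropy: one must check that $\pi$ is a genuine local diffeomorphism on each sheet of the orbit-type stratum, and therefore that the index downstairs equals the index on a single sheet rather than the sum over the orbit. This is exactly where the finiteness of $G$ (making $G/G_i$ discrete) and the slice-theorem description $V_i\cap(G\times_{G_i}S)=(G/G_i)\times S^{G_i}$, equivalently the identification $V^{([H])}/G=V^{(H)}/N_G(H)$ recorded earlier, do the work; once the sheet structure is in place the bookkeeping of multiplicities and orbit sizes is routine.
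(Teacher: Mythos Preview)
Your argument is correct and follows essentially the same route as the paper. The paper's ``proof'' is just the sentence preceding the proposition: once one knows that $\pi_*\widetilde X$ is an appropriate deformation of $\overline X$, the statement is read off from the definition of $\indrad^G$ (equivalently from~(\ref{equi1})) by grouping strata according to isotropy type, exactly as you do; your slice-theorem discussion and orbit count simply make explicit the bookkeeping the paper leaves to the reader.
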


Let $\omega$ be a 1-form on $V/G$ with an isolated singular point at $0$. The 1-form $\pi^*\omega$ on $V$
is $G$-invariant and has an isolated singular point at $0$. One can easily see that the following analogue
of Proposition~\ref{prop-vect} holds.

\begin{proposition}
$$
\indrad^G(\pi^*\omega; V,0)=\sum_{[H]\in\Conjsub G} \indrad(\omega;V^{([H])}/G, 0)[G/H]\,.
$$
\end{proposition}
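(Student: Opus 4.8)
The plan is to repeat the argument of Proposition~\ref{prop-vect} verbatim, with the pullback $\pi^*$ of the given $1$-form playing the role of the pushforward $\pi_*$ of a $G$-invariant vector field. Since, by Proposition~\ref{prop-distr} and formula~(\ref{equi1}), the equivariant radial index depends only on $\pi^*\omega$ and not on the chosen $G$-invariant deformation, I am free to compute $\indrad^G(\pi^*\omega;V,0)$ from a deformation of a convenient shape. First I would fix an appropriate (non-equivariant) deformation $\overline{\omega}$ of $\omega$ on $V/G$ with finitely many singular points and put $\widetilde{\omega}=\pi^*\overline{\omega}$. This $1$-form is automatically $G$-invariant, and I claim it is an appropriate deformation of $\pi^*\omega$ in the sense of conditions (1)--(3). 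Condition (1) is clear: as the $G$-action may be taken orthogonal, the sphere $S_\eps$ is $G$-invariant and $\pi^*\overline{\omega}$ agrees with $\pi^*\omega$ near $V\cap S_\eps$ because $\overline{\omega}$ agrees with $\omega$ near $(V/G)\cap S_\eps$. Condition (2) holds because $\pi$ is finite-to-one, so $\Sing\widetilde{\omega}=\pi^{-1}(\Sing\overline{\omega})$ is finite.

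The substance of the proof is condition (3), the $1$-form analogue of the ``product/cone-like structure'' compatibility used for vector fields. Near a point $p\in V_i$ with isotropy $G_p=G_i$ the stratum $V_i$ coincides locally with the fixed submanifold $V_i^{(G_i)}\subset V_i^{G_i}$, so $G_i$ acts trivially along $V_i$ near $p$; since $G$ is finite the orbit $Gp$ is discrete, and the slice description of the action yields a stratified local identification $(V/G,\overline{p})\cong (V,p)/G_i$ under which the normal slice to $V_i/G$ at $\overline{p}$ is the quotient $N_i/G_i$ of the normal slice $N_i$ at $p$, while $\pi$ restricts to a local diffeomorphism of $V_i$ onto $V_i/G$. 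Writing $\overline{\omega}$ near $\overline{p}$ as a radial extension, its tangential part transfers to $V_i$ through this local diffeomorphism, while its normal summand is a radial $1$-form $\eta^{rad}$ on $N_i/G_i$. I would verify that $\pi^*\eta^{rad}$ is again radial on $N_i$ by pushing curves forward: for any analytic curve $\varphi:(\RR_{\ge0},0)\to(N_i,0)$ the composite $\pi\circ\varphi$ is an analytic curve into $N_i/G_i$, and
\begin{equation*}
(\pi^*\eta^{rad})\Big(\varphi_*\tfrac{\partial}{\partial t}\Big)=\eta^{rad}\Big((\pi\circ\varphi)_*\tfrac{\partial}{\partial t}\Big)>0
\end{equation*}
in a punctured neighbourhood of $0$. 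Hence $\pi^*\overline{\omega}$ is a radial extension of its restriction to $V_i$ at each of its zeros, establishing (3).

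Granting that $\widetilde{\omega}=\pi^*\overline{\omega}$ is appropriate, the bookkeeping is identical to Proposition~\ref{prop-vect}. A singular point $\overline{p}$ of $\overline{\omega}$ on a stratum $V_i/G$, with index $m=\ind(\overline{\omega};V_i/G,\overline{p})$, has preimage $\pi^{-1}(\overline{p})$ equal to a single $G$-orbit isomorphic to the $G$-set $[G/G_i]$; since $\pi$ is a local diffeomorphism of $V_i$ at each of these points, each is a zero of $\widetilde{\omega}$ with index $m$ on $V_i$. Thus this orbit contributes $m\,[G/G_i]$ to the class in $B(G)$ of the weighted singular set of $\widetilde{\omega}$, and the total contribution of the stratum is $\indrad(\omega;V_i/G,0)\,[G/G_i]$. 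Summing over $i$, grouping strata by the conjugacy class $[H]=[G_i]$ of their isotropy, and using the additivity of the radial index over the strata of $V^{([H])}/G=\bigcup_{i:\,[G_i]=[H]}V_i/G$ gives
\begin{equation*}
\indrad^G(\pi^*\omega;V,0)=\sum_{i=1}^q\indrad(\omega;V_i/G,0)\,[G/G_i]=\sum_{[H]\in\Conjsub G}\indrad(\omega;V^{([H])}/G,0)\,[G/H]\,,
\end{equation*}
which is the assertion.

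I expect the only real obstacle to be condition (3): checking that pulling a radial extension back through the normal-slice quotient $N_i\to N_i/G_i$ again yields a radial extension. This is exactly the $1$-form counterpart of the cone/product-structure compatibility that was merely asserted for vector fields, and it is where the positivity characterization of radial $1$-forms---stable under pushing analytic curves forward---does the work; the remaining steps are the deformation-independence and additivity of the radial index, already available, together with the linear bookkeeping in the Burnside ring.
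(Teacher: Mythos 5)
Your proposal is correct and follows essentially the same route as the paper, which offers no written proof beyond ``one can easily see'' that the argument for Proposition~\ref{prop-vect} transfers: there, too, the key point is that the product/cone-like structure near $\overline{p}\in V_i/G$ lifts to a $G_p$-invariant one near $p$, so that an appropriate deformation transported through $\pi$ remains appropriate, after which the Burnside-ring bookkeeping over orbits $[G/G_i]$ is routine. Your explicit verification via the curve-positivity definition that the pullback of a radial $1$-form under $N_i\to N_i/G_i$ is again radial supplies a detail the paper leaves implicit, but it is a fleshing-out of the same argument rather than a different one.
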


\begin{example} Assume that $f:(V,0)\to(\RR,0)$ is a $G$-invariant germ of an analytic function on $(V,0)$
with an isolated critical point at $0$. (This means that the differential $df$ of $f$ has an isolated singular point at $0$.)
For a small enough positive $\eps$ the restriction of $f$ to $V\cap B_{\eps}$ is a locally trivial fibration over a punctured neighbourhood of zero in $\RR$, in general with different fibres over the positive and the negative
part. The fibres ($G$-varieties) $M_f^+$ and $M_f^-$ of this fibration can be called positive and negative Milnor fibres of $f$ respectively. One has the following generalization of \cite[Theorem 2]{EGS}.
\end{example}

\begin{proposition}\label{ind-mu-form}
$$
\indrad^G(df;V,0)=-\overline{\chi}^G(M_f^-)\,.
$$
\end{proposition}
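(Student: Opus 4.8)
The plan is to reduce the statement to the non-equivariant Theorem~2 of \cite{EGS}, applied on the quotient variety and on the images of the various fixed-point sets, and then to reassemble the answer in $B(G)$. First I would use that $f$ is $G$-invariant, so $f=\overline{f}\circ\pi$ and hence $df=\pi^{*}(d\overline{f})$. Applying the $1$-form version of Proposition~\ref{prop-vect} with $\omega=d\overline{f}$ gives
$$
\indrad^G(df;V,0)=\sum_{[H]\in\Conjsub G}\indrad(d\overline{f};V^{([H])}/G,0)\,[G/H].
$$
On the other hand, by the definition of the equivariant Euler characteristic and of its reduced version,
$$
-\overline{\chi}^G(M_f^-)=-\sum_{[H]\in\Conjsub G}\chi\big((M_f^-)^{([H])}/G\big)\,[G/H]+[G/G].
$$
Thus it suffices to match coefficients, i.e.\ to show for every $[H]$ that
$$
\indrad(d\overline{f};V^{([H])}/G,0)=-\chi\big((M_f^-)^{([H])}/G\big)+\delta_{[H],[G]},
$$
where $\delta_{[H],[G]}$ equals $1$ if $[H]=[G]$ and $0$ otherwise (the term $[G/G]$ in $\overline{\chi}^G$ affects only the coefficient of $[G/G]$).

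The second step is to identify the Milnor fibres. Since $M_f^-$ is the intersection of $B_\eps$ with a level set of the $G$-invariant function $f$ and $f=\overline{f}\circ\pi$, restriction to an orbit type and passage to the $G$-quotient commute with the Milnor fibration: the space $(M_f^-)^{([H])}/G$ is precisely the negative Milnor fibre of the restriction of $\overline{f}$ to the locally closed variety $V^{([H])}/G$, and likewise $(M_f^-)^{[H]}/G$ is the negative Milnor fibre of $\overline{f}$ on the closed variety $V^{[H]}/G=\big(V^{[H]}\big)/G$.

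The core step is then an application of \cite[Theorem~2]{EGS} on the closed subanalytic varieties $V^{[H]}/G$, each of which contains the $G$-fixed origin where $d\overline{f}$ has an isolated singular point, yielding
$$
\indrad(d\overline{f};V^{[H]}/G,0)=-\overline{\chi}\big((M_f^-)^{[H]}/G\big)=-\chi\big((M_f^-)^{[H]}/G\big)+1.
$$
To pass from these cumulative fixed-point identities to the per-orbit-type identity I would use additivity of the radial index (Proposition~\ref{prop-distr}) and of the Euler characteristic over the decomposition $V^{[H]}/G=\bigsqcup_{[K]\ge[H]}V^{([K])}/G$. For $[H]\ne[G]$ the set $V^{([H])}/G$ is the difference of the closed varieties $V^{[H]}/G$ and $\bigcup_{[K]>[H]}V^{([K])}/G$, both containing the origin (the origin lies in $V^{([G])}$, and $[G]>[H]$); subtracting the two identities cancels the two ``$+1$'' contributions and leaves $\indrad(d\overline{f};V^{([H])}/G,0)=-\chi\big((M_f^-)^{([H])}/G\big)$. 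For the top class $[H]=[G]$ one has $V^{([G])}/G=V^{G}$, which is closed and contains the origin, so the reduced identity applies directly and the surviving ``$+1$'' gives $\delta_{[G],[G]}=1$. (Equivalently, one inverts the two triangular systems over the poset $\Conjsub G$ and uses $\sum_{[K]\ge[H]}\mu([H],[K])=\delta_{[H],[G]}$.) Summing the matched coefficients over $[H]$ gives the claim.

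I expect the main obstacle to be the identification in the second step: verifying rigorously that taking the fixed points of a subgroup, restricting to a given orbit type, and passing to the $G$-quotient are all compatible with the Milnor fibration of $f$, so that $(M_f^-)^{[H]}/G$ really is the negative Milnor fibre of $\overline{f}$ on $V^{[H]}/G$, together with checking that the origin remains an isolated singular point of $d\overline{f}$ on each $V^{[H]}/G$ so that \cite[Theorem~2]{EGS} is applicable. Once this compatibility is established, the remaining bookkeeping with reduced versus unreduced Euler characteristics—handled either by the difference-of-closed-varieties argument or by Möbius inversion—is routine.
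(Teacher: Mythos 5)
Your argument is essentially sound, but note first that the paper itself gives no proof of Proposition~\ref{ind-mu-form}: it is stated as a generalization of \cite[Theorem 2]{EGS}, and the intended argument is presumably the direct equivariant one, mirroring the proof of the equivariant Poincar\'e--Hopf theorem in Section~\ref{sect3} --- take a $G$-invariant deformation of $df$ appropriate in the sense of the definition of the radial index and run the Morse-theoretic count of \cite{EGS} equivariantly, using that $V\cap B_\eps$ is a $G$-cone so that $\chi^G(V\cap B_\eps)=[G/G]$; this stays in $B(G)$ throughout and never passes to the quotient. Your route instead reduces everything to the non-equivariant theorem on quotient spaces via the $1$-form analogue of Proposition~\ref{prop-vect}, and your combinatorial bookkeeping is correct: the union $\bigcup_{[K]>[H]}V^{[K]}$ is indeed closed, the additivity of the radial index over a partition into strata (the mechanism of Proposition~\ref{prop-distr}) and of the Euler characteristic with compact supports (which is the one the paper uses to define $\chi^G$) make the two ``$+1$'' terms cancel for $[H]\ne[G]$, and the top class $[H]=[G]$, where $V^{([G])}=V^G$ is closed, supplies exactly the $[G/G]$ of the reduced equivariant Euler characteristic.

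The one soft spot is the core step, which you yourself flag: \cite[Theorem 2]{EGS} concerns analytic function germs on analytic varieties, whereas $V^{[H]}/G$ is merely subanalytic and $\overline{f}$ only descends as a subanalytic, stratum-wise smooth function; you would also need to justify that $d\overline{f}$ is a $1$-form in the paper's sense, i.e.\ the restriction of an ambient one under a suitable (invariant-theoretic) embedding of the quotient. This is repairable --- the radial index and Poincar\'e--Hopf-type statements extend to the subanalytic setting by \cite{D}, and the Milnor fibration downstairs is, as you say, the quotient of the equivariant fibration upstairs --- but a cleaner fix within the paper's own toolkit is to run the same M\"obius inversion \emph{upstairs}: by Proposition~\ref{propsubgroups}, $\indrad^G(df;V,0)$ is determined by the indices $\indrad(df;V^K,0)$ on the fixed point sets $V^K$, which are genuinely analytic; there $f_{\vert V^K}$ is analytic with an isolated critical point and $(M_f^-)^K$ is its negative Milnor fibre, so \cite[Theorem 2]{EGS} applies verbatim, and inverting the same triangular system that expresses $\chi^G(M_f^-)$ through the numbers $\chi((M_f^-)^K)$ yields the claim without ever leaving the analytic category. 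With either repair your proof is complete.
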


For a ($G$-invariant) complex analytic function $f:(V,0)\to(\CC,0)$ on a germ of a complex analytic variety,
the Milnor fibres $M_f^{\pm}$ of its real part are homeomorphic to the product $M_f\times I$, where
$M_f=f^{-1}(\delta)\cap B_{\eps}$ is the Milnor fibre of $f$ ($0<\vert\delta\vert\ll\eps$ small enough),
$I$ is the segment $[0,1]$: see \cite[Proposition 2.A.3]{GM}. Therefore in this case 
$$
\indrad^G(df;V,0)=-\overline{\chi}^G(M_f)\,.
$$

\begin{example}\label{ex-dim}
Let the complex vector space $\CC^n$ be equipped with a (linear) $G$-action and let
$\omega=\sum\limits_{i=1}^n A_i(z)dz_i$ be a complex analytic $G$-invariant 1-form on $(\CC^n, 0)$
with an isolated singular point at the origin. For a subgroup $K\subset G$, let $n_K$ be the dimension of the
fixed point set $(\CC^n)^K$ of $K$ (a vector subspace). The (radial) index $\indrad(\omega; (\CC^n)^K,0)$
is $(-1)^{n_K}$ times the dimension of the space
$\Omega_{(\CC^n)^K,\, \omega}:=\Omega_{(\CC^n)^K,0}^{n_K}/(\Omega_{(\CC^n)^K,0}^{n_K-1}\wedge \omega_{\vert(\CC^n)^K})$, where
$\Omega_{(\CC^n)^K,0}^{m}$ is the vector space of germs of analytic differential $m$-forms on $((\CC^n)^K,0)$.
Together with Proposition~\ref{propsubgroups} this gives
$$
\indrad^G(\omega;\CC^n,0)=
$$
$$
\sum_{[H]\in\Conjsub G}\frac{\vert H\vert}{\vert N_G(H)\vert}
\left(\sum_{K\in {\Sub G}}(-1)^{n_K}\mu'(H,K)
\dim\Omega_{(\CC^n)^K,\,\omega}\right)
[G/H]\,.
$$
\end{example}

\begin{example}
Let $f(z_1, \ldots, z_n)=\sum_{i=1}^n \prod_{j=1}^n z_i^{E_{ij}}$ be an invertible polynomial (i.e.\ $E_{ij} \in \ZZ$, $E_{ij} \geq 0$, $\det(E_{ij})\ne 0$) with an isolated critical point at the origin. Let 
$$
G_f=\{(\lambda_1, \ldots, \lambda_n)\in\CC^n:
f(\lambda_1 z_1, \ldots, \lambda_n z_n)=f(z_1, \ldots, z_n)\}
$$ 
be the group of diagonal symmetries of $f$.
The Berglund-H\"ubsch dual polynomial to $f$ is
$\widetilde{f}(z_1, \ldots, z_n)=\sum_{i=1}^n \prod_{j=1}^n z_i^{E_{ji}}$.
The group $G_{\widetilde{f}}$ of diagonal symmetries of $\widetilde{f}$ is (canonically) isomorphic
to $G_f^*={\rm Hom}(G,\CC^*)$ \cite{BLMS}. The result of \cite{BLMS} together with Proposition~\ref{ind-mu-form} says
that the indices $\indrad^{G_f}(df;\CC^n,0)$ and $\indrad^{G_{\widetilde{f}}}(d\widetilde{f};\CC^n,0)$
are Saito dual to each other in the sense of \cite{BLMS}.
Let $\overline{f}$ and $\overline{\widetilde{f}}$ be the functions on $\CC^n/G_f$ and
$\CC^n/G_{\widetilde{f}}$ corresponding to $f$ and $\widetilde{f}$ respectively. The Saito dual
elements of the Burnside rings $B(G_f)$ and $B(G_{\widetilde{f}})$ have the same images
under $r^{(0)}_{G_f}$ and $r^{(0)}_{G_{\widetilde{f}}}$ respectively. Therefore the indices
$\indrad^{G_f,(0)}(df;\CC^n,0)=\indrad(d\overline{f}; \CC^n/G_f,0)$ and
$\indrad^{G_{\widetilde{f}},(0)}(d\widetilde{f};\CC^n,0)=
\indrad(d\overline{\widetilde{f}}; \CC^n/G_{\widetilde{f}},0)$ (equal to the minus reduced Euler
characteristics of $M_f/G_f$ and $M_{\widetilde{f}}/G_{\widetilde{f}}$ respectively) coincide.
For a subgroup $H\subset G_f$ its dual  $H^T\subset G_{\widetilde{f}}$ is the kernel of the natural map $i^*: G_f^* \to H^*$ induced by the inclusion
$i : H \to G_f$. The result of \cite{MMJ2} says that the orbifold indices $\indrad^{H,(1)}(df;\CC^n,0)$ and $\indrad^{H^T,(1)}(d\widetilde{f};\CC^n,0)$ coincide.
\end{example}

\section{Equivariant GSV-index}\label{sect4}

Let $(\CC^n,0)$ be endowed by a $G$-action. (Without loss of generality one may assume that the action is linear,
i.e.\ it is defined by a representation $G\to {\rm GL}(n)$.) Let $f_1, \ldots, f_k$ be $G$-invariant holomorphic
function germs on $(\CC^n,0)$ defining a complete intersection singularity 
$(V,0)=\{f_1=\ldots=f_k=0\}$. We assume that a neighbourhood of the origin in  $\CC^n$ permits a Whitney stratification adapted to V and satisfying the Thom $a_f$ condition.  

Let $X$ be a $G$-invariant stratified vector field on $(V,0)$ with an isolated singular point at the origin. In particular,
the restriction of $X$ to a neighbourhood of $V\cap S_{\eps}^{2n-1}$ has no zeros.
The vector fields $\widehat{X}$ and $\widetilde{X}$ described in Section~\ref{sect1} can be made $G$-invariant. (To get a $G$-invariant vector field, one can take an arbitrary one and take the mean over the group.)

\begin{definition}
The {\em $G$-equivariant GSV-index} of the vector field $X$ is
\begin{equation}\label{G-GSV}
\indGSV^G(X;V,0)=
\sum_{\overline{p}\in (\Sing \widetilde{X})/G}
\indrad^G (\widetilde{X}; M_{\underline{\delta}},\pi^{-1}(\overline{p}))\in B(G),
\end{equation}
where $\pi:M_{\underline{\delta}}\to M_{\underline{\delta}}/G$ is the canonical projection.
\end{definition}

It is easy to show that the right hand side of (\ref{G-GSV}) does not depend on the extension $\widetilde{X}$
of $\widehat{X}$ and therefore the equivariant GSV-index is well-defined.

A compact complex analytic $G$-variety $V$, all singular points of which are ICIS, has a natural smoothing
$\widetilde{V}$. (The manifold $V$ is in general $C^{\infty}$, not necessarily complex analytic.
It can be chosen to be complex analytic if $V$ is a projective complete intersection with
isolated singularities.) Let $X$ be a $G$-invariant vector field on $V$ with isolated singular points.
For $p\in \Sing X$, the $G$-equivariant GSV-index of $X$ at the orbit $Gp$ is defined as
$$
\indGSV^G(X; V,Gp):=I_{G_p}^G (\indGSV^{G_p}(X;V,p))\,.
$$
Since the GSV-index counts singular points of the vector field on the 
smoothing of the ICIS, one has the following version of the Poincar\'e--Hopf Theorem.

\begin{proposition}
The sum of the equivariant GSV-indices of the singular orbits of $X$ on $V$ is equal to the 
equivariant Euler characteristic of $\widetilde{V}$:
$$
\sum_{Gp\in(\Sing X)/G} \indGSV^G(X;V,Gp) = \chi^G(\widetilde{V})\in B(G)\,.
$$
\end{proposition}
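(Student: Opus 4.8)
The plan is to reduce the equivariant statement to a fixed-point-set decomposition and then apply the ordinary (non-equivariant) Poincar\'e--Hopf theorem for the GSV-index together with the additivity of the ordinary Euler characteristic, exactly mirroring the structure of the proof of the Poincar\'e--Hopf Theorem for the radial index given earlier. First I would choose a $G$-stratification $V=\bigcup_{i=1}^q V_i$ of $V$ (and a compatible $G$-stratification of the smoothing $\widetilde{V}$) such that each stratum is $G$-invariant with constant orbit type $[G_i]$. By the definition of the equivariant GSV-index together with Proposition~\ref{prop-vect} applied at each singular orbit, I expect to be able to write the left-hand side as a sum over strata,
$$
\sum_{Gp\in(\Sing X)/G} \indGSV^G(X;V,Gp)
=\sum_{i=1}^q \sum_{p\in \widetilde{V}_i}\frac{\vert G_i\vert}{\vert G\vert}\,\ind(\widetilde{X};\widetilde{V}_i,p)\,[G/G_i]\,,
$$
where $\widetilde{X}$ is a $G$-invariant extension of $\widehat{X}$ to the smoothing $\widetilde{V}$ with isolated singular points, obtained by averaging over $G$ as in Section~\ref{sect4}. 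The coefficient $\vert G_i\vert/\vert G\vert=1/\vert G/G_i\vert$ records that each orbit on $\widetilde{V}_i$ contributes once to the quotient, and the class of each such orbit is $[G/G_i]$.

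Next I would expand the target $\chi^G(\widetilde{V})$ in the same basis. Using the Remark after the definition of the equivariant Euler characteristic (so that $\widetilde{V}^{([H])}/G=\widetilde{V}^{(H)}/N_G(H)$) together with the additivity of $\chi$ over the strata, the same stratification gives
$$
\chi^G(\widetilde{V})=\sum_{i=1}^q \frac{\vert G_i\vert}{\vert G\vert}\,\chi(\widetilde{V}_i)\,[G/G_i]\,.
$$
Since the classes $[G/G_i]$ need not be distinct, I would group the strata by conjugacy class $[H]\in\Conjsub G$ and compare coefficients of each $[G/H]$ separately; because $\{[G/K]\}_{[K]\in\Conjsub G}$ is a free $\ZZ$-basis of $B(G)$, it suffices to prove the equality of coefficients stratum-class by stratum-class. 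Thus the whole equivariant identity collapses to the single non-equivariant claim that, for each stratum $\widetilde{V}_i$ (a smooth manifold, or smooth with boundary after intersecting with the smoothing near the regular part),
$$
\sum_{p\in \widetilde{V}_i}\ind(\widetilde{X};\widetilde{V}_i,p)=\chi(\widetilde{V}_i)\,.
$$

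The main obstacle, and the step requiring the most care, is this last reduction to a genuine Poincar\'e--Hopf statement on each stratum of the \emph{smoothing}. On the top stratum (the regular part smoothed to the Milnor fibres $M_{\underline{\delta}}$ glued along the nonsingular locus of $\widetilde{V}$) one must verify that, after the averaging construction, the vector field $\widetilde{X}$ on the manifold-with-boundary piece $W_i$ points inwards on the boundary exactly as in the proof of Proposition~\ref{prop-distr}, so that the index sum equals $\chi(W_i\setminus\partial W_i)=\chi(\widetilde{V}_i)$; this is the content of the ordinary GSV Poincar\'e--Hopf theorem of \cite{BSS} applied $G$-equivariantly. I would run the boundary-homotopy argument from the proof of the radial Poincar\'e--Hopf theorem verbatim, observing that averaging over $G$ preserves both the non-vanishing of $\widehat{X}$ near $V\cap S_\eps^{2n-1}$ and the inward-pointing property, so the homotopy can be taken $G$-invariant and hence descends to each orbit type. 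Once each stratum contributes $\chi(\widetilde{V}_i)$, summing with the weights $\vert G_i\vert/\vert G\vert$ and the classes $[G/G_i]$ yields $\chi^G(\widetilde{V})$, completing the proof. The only genuinely new point beyond the radial case is checking that the smoothing $\widetilde{V}$ carries a $G$-action compatible with the stratification and that the orbit-type strata of $\widetilde{V}$ refine consistently with those of $V$; for projective complete intersections with isolated singularities this is automatic, and in general one uses the $C^\infty$ $G$-smoothing.
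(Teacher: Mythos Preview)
Your proposal is correct and follows essentially the same route as the paper: observe that, by definition, the GSV-index counts singular points of a $G$-invariant extension of the vector field on the smoothing $\widetilde{V}$, and then apply the equivariant Poincar\'e--Hopf theorem on the smooth compact $G$-manifold $\widetilde{V}$. The paper in fact offers only a one-sentence justification (``Since the GSV-index counts singular points of the vector field on the smoothing of the ICIS\dots''), implicitly invoking Theorem~1 on $\widetilde{V}$; you have simply spelled out again the stratum-by-stratum argument already given in the proof of Theorem~1, which is fine but more than is needed.
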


A relation between the equivariant GSV-index and the radial one can be described in the following way.
The Milnor fibre $M_{\underline{\delta}}$ is a manifold with a $G$-action and therefore its equivariant
Euler characteristic $\chi^G(M_{\underline{\delta}})\in B(G)$ is defined. Let 
$\overline{\chi}^G(M_{\underline{\delta}})=\chi^G(M_{\underline{\delta}})-1$ be the reduced
equivariant Euler caracteristic of $M_{\underline{\delta}}$. (The element
$(-1)^{n-k} \overline{\chi}^G(M_{\underline{\delta}})\in B(G)$ can be regarded as an equivariant version
of the Milnor number of the ICIS $(V,0)$.)

\begin{proposition} \label{propGSV-rad}
One has
$$
\indGSV^G(X;V,0)=\indrad^G(X;V,0)+\overline{\chi}^G(M_{\underline{\delta}})\,.
$$
\end{proposition}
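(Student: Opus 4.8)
The plan is to deduce the identity from the two equivariant Poincar\'e--Hopf theorems established above by passing to a compact model and then computing the resulting difference of equivariant Euler characteristics through its marks, i.e.\ its values on fixed-point sets.

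First I would realize the germ $(V,0)$ as the only singular point of a compact complex analytic $G$-variety $\widehat{V}$ which is a complete intersection with isolated singularities (so that the equivariant GSV-index and its Poincar\'e--Hopf proposition apply), and extend $X$ to a $G$-invariant vector field $\widehat{X}$ on $\widehat{V}$ with isolated zeros, one of which is at the $G$-fixed point $0$ where $\widehat{X}=X$. Applying the equivariant Poincar\'e--Hopf Theorem to $\widehat{X}$ on $\widehat{V}$ gives $\sum_{Gp}\indrad^G(\widehat{X};\widehat{V},Gp)=\chi^G(\widehat{V})$, while the equivariant GSV version gives $\sum_{Gp}\indGSV^G(\widehat{X};\widehat{V},Gp)=\chi^G(\widetilde{\widehat{V}})$, where $\widetilde{\widehat{V}}$ is the smoothing of $\widehat{V}$ at $0$. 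At every singular orbit lying in the smooth locus of $\widehat{V}$ the two equivariant indices coincide, since there the local Milnor fibre is a point and both reduce to the equivariant index of a vector field at an isolated zero on a smooth $G_p$-manifold; subtracting the two identities therefore leaves only the orbit of $0$:
$$
\indGSV^G(X;V,0)-\indrad^G(X;V,0)=\chi^G(\widetilde{\widehat{V}})-\chi^G(\widehat{V}).
$$
Because the smoothing is supported in a neighbourhood of $0$, additivity of $\chi^G$ shows the right-hand side depends only on the germ $(V,0)$.

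It then remains to identify $\chi^G(\widetilde{\widehat{V}})-\chi^G(\widehat{V})$ with $\overline{\chi}^G(M_{\underline{\delta}})$, and here I would argue via the marks $\phi_H\colon B(G)\to\ZZ$, $\phi_H([A])=|A^H|$, which jointly detect elements of $B(G)$. Using the standard identity $\phi_H(\chi^G(Y))=\chi(Y^H)$ for a $G$-space $Y$, together with the observation that $F=(f_1,\dots,f_k)$ is $G$-equivariant with trivial action on the target---so that $(M_{\underline{\delta}})^H$ is exactly the Milnor fibre of the complete intersection $V^H=V\cap(\CC^n)^H$, and passing to $H$-fixed points commutes with the smoothing---the computation reduces, fixed-point set by fixed-point set, to the classical non-equivariant smoothing formula $\chi(\widetilde{V^H})-\chi(V^H)=\overline{\chi}((M_{\underline{\delta}})^H)$. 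Hence $\phi_H$ of the left-hand side equals $\chi((M_{\underline{\delta}})^H)-1=\phi_H(\overline{\chi}^G(M_{\underline{\delta}}))$ for every $[H]\in\Conjsub G$, and since the marks determine the element, the two sides agree in $B(G)$.

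The main obstacle is the globalization step: producing a compact $G$-complete intersection carrying the prescribed singularity at a $G$-fixed point together with a compatible linear $G$-action and an invariant extension of $X$. An alternative that sidesteps it is to prove directly that the difference $\indGSV^G(X;V,0)-\indrad^G(X;V,0)$ is \emph{independent} of $X$---by interpolating between two invariant fields on a $G$-invariant collar of the link and checking, via the $G$-equivariant BSS approximation, that the interpolation contributes the same class of $B(G)$ to both indices---and then evaluating the difference on a radial field, for which $\indrad^G=[G/G]$ and $\indGSV^G=\chi^G(M_{\underline{\delta}})$. The delicate point in either route is the same: controlling, $G$-equivariantly, the index contribution coming from near the link, where $V$ is singular but the Milnor fibre is smooth.
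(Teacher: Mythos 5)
Your fallback in the last paragraph is precisely the paper's proof: the authors deduce the proposition from (1) the law of conservation of number for both the equivariant radial index and the equivariant GSV-index, and (2) evaluation on a radial vector field $X_{rad}$, for which $\indrad^G(X_{rad};V,0)=[G/G]=1$ while the extension of $X_{rad}$ to the Milnor fibre points outward along $\partial M_{\underline{\delta}}$, so that $\indGSV^G(X_{rad};V,0)=\chi^G(M_{\underline{\delta}})$; subtracting gives $\overline{\chi}^G(M_{\underline{\delta}})$. Your values are the consistent ones. (In fact the paper's fact 2) literally reads $\indGSV^G(X_{rad};V,0)=\overline{\chi}^G(M_{\underline{\delta}})$, which is incompatible with $\indrad^G(X_{rad};V,0)=1$ and with the statement being proved; it is evidently a slip for $\chi^G(M_{\underline{\delta}})$.) The one point your sketch leaves implicit --- that an interpolation between two invariant fields contributes the same class of $B(G)$ to both indices --- is exactly what conservation of number buys: the extra zeros of a deformation can be taken at orbits away from $0$, where the local GSV and radial contributions agree, so the difference $\indGSV^G-\indrad^G$ is independent of $X$.

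Your primary route, by contrast, has a genuine gap, and it is the one you name yourself: the existence of a compact complex analytic $G$-complete-intersection $\widehat{V}$ with a compatible action, having $(V,0)$ as its only singularity at a fixed point and carrying an invariant extension of $X$. No such globalization is constructed in the paper, and it cannot simply be posited for an arbitrary $G$-germ. Worse, the proposition is stated in the generality of Section~4, where $(V,0)$ need only admit a Whitney stratification with the Thom $a_f$ condition and may have a non-isolated singular locus; then any compact model has non-isolated singularities, and the equivariant GSV Poincar\'e--Hopf proposition you invoke requires all singular points of the compact variety to be ICIS, so the route fails at its first step outside the ICIS case. Two smaller remarks on your marks computation: the identity $\phi_H(\chi^G(Y))=\chi(Y^H)$ and the injectivity of the total mark homomorphism are indeed standard and correct; but $(M_{\underline{\delta}})^H$ need not be ``the Milnor fibre of the complete intersection $V^H$'' --- the restriction of $(f_1,\ldots,f_k)$ to $(\CC^n)^H$ need not define a complete intersection (e.g.\ when $\dim(\CC^n)^H<k$) --- although $(M_{\underline{\delta}})^H$ is smooth as the fixed set of a finite group acting on a smooth manifold, and plain additivity of $\chi$ already yields $\chi(\widetilde{V}^H)-\chi(V^H)=\chi((M_{\underline{\delta}})^H)-1$, so that step is repairable without the Milnor-fibre interpretation. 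In short: discard the globalization and promote your alternative; it is the paper's argument, it is purely local, and it works in the stated generality.
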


\begin{proof}
 This follows from the following facts: 1) both the GSV-index and the radial one 
 satisfy the law of conservation of number, and 2) for the radial vector field
 $X_{rad}$ on $(V,0)$ one has $\ind^G(X_{rad};V,0)=1$,
 $\indGSV^G(X_{rad};V,0)=\overline{\chi}^G(M_{\underline{\delta}})$.
\end{proof}

Analogous notions and statements exist for $G$-invariant 1-forms on  complete intersection singularities.

\begin{example}
 Let $(V,0)=\{f_1=\ldots=f_k=0\}$ be an ICIS. The equivariant Euler characteristic
 $\chi^G(M_{\underline{\delta}})$ of the Milnor fibre of $(V,0)$ is equal to the sum
 of the equivariant indices of the $G$-orbits of singular points on $M_{\underline{\delta}}$
 of a radial 1-form on $V$. Applying Proposition~\ref{propsubgroups} to each of the indices one gets:
 \begin{equation*}
 \chi^G(M_{\underline{\delta}}) = \sum_{[H]\in\Conjsub G}\frac{\vert H\vert}{\vert N_G(H)\vert}
 \left(\sum_{K\in\Sub G}\mu'(H,K) \chi(M_{\underline{\delta}})\right)[G/H]\,.
\end{equation*}
Since $\sum\limits_{K}\mu'(H,K)=\sum\limits_{K}\zeta(K,G)\mu'(H,K)=\delta(H,G)$, where $\delta(\cdot , \cdot)$ is the Kronecker delta function,  one has
 \begin{equation}\label{chi}
 \overline{\chi}^G(M_{\underline{\delta}}) = \sum_{[H]\in\Conjsub G}\frac{\vert H\vert}{\vert N_G(H)\vert}
 \left(\sum_{K\in\Sub G}\mu'(H,K) \overline{\chi}(M_{\underline{\delta}})\right)[G/H]\,.
\end{equation}
Let $\omega$ be a holomorphic $G$-invariant 1-form with an isolated singular point at the origin.
If the dimension of the fixed point set $(\CC^n)^K$ of a subgroup $K\subset G$ is greater than $k$, then
the GSV-index $\indGSV(\omega;(\CC^n)^K,0)$ is $(-1)^{n_K-k}$ times the dimension of the space
$\Omega_{V^K,\,\omega}=\Omega^{n_K-k}_{V^K,0}/(\omega\wedge \Omega^{n_K-k-1}_{V^K,0})$,
where $\Omega^{s}_{V^K,0}:=
\Omega^{s}_{(\CC^n)^K,0}/
(f_i\Omega^{s}_{(\CC^n)^K,0}, df_i\wedge \Omega^{s-1}_{(\CC^n)^K,0})$ \cite{MMJ, MZ}.

Propositions~\ref{propsubgroups}, \ref{propGSV-rad} and Equation~\ref{chi} imply that
$$
\indGSV^G(\omega; V,0) = \sum_{[H]\in\Conjsub G}\frac{\vert H\vert}{\vert N_G(H)\vert}
 \left(\sum_{{K\in\Sub G}\atop{\dim{(\CC^K)^K} > k}}
 \mu(H,K) \dim \Omega_{V^K, \,\omega}\right)[G/H]\,.
$$
\end{example}


\bigskip
\noindent Leibniz Universit\"{a}t Hannover, Institut f\"{u}r Algebraische Geometrie,\\
Postfach 6009, D-30060 Hannover, Germany \\
E-mail: ebeling@math.uni-hannover.de\\

\medskip
\noindent Moscow State University, Faculty of Mechanics and Mathematics,\\
Moscow, GSP-1, 119991, Russia\\
E-mail: sabir@mccme.ru

\end{document}